\documentclass[11pt,letterpaper]{amsart}
\pdfoutput=1

\usepackage{amsopn,amssymb}
\usepackage{lpic}
\usepackage{caption}
\usepackage{hyperref}
\hypersetup{pdfauthor={Jonah Gaster},pdftitle={lifting curves simply},colorlinks=true,linkcolor=black,citecolor=black}

\renewcommand{\H}{\mathbb{H}}

\numberwithin{equation}{section}

\theoremstyle{plain}
\newtheorem{thm}{Theorem}
\newtheorem{cor}[thm]{Corollary}

\newtheorem{lemma}[thm]{Lemma}
\newtheorem{prop}[thm]{Proposition}

\newenvironment{remark}[1][Remark.]{\begin{trivlist}
\item[\hskip \labelsep {\bfseries #1}]}{\end{trivlist}}

\theoremstyle{definition}

\theoremstyle{definition}

\begin{document}
\title{lifting curves simply}
\author{Jonah Gaster}

\address{
Mathematics Department\\
Boston College\\
\tt{gaster@bc.edu}
}

\date{January 1, 2015}

\begin{abstract}
{We provide linear lower bounds for $f_\rho(L)$, the smallest integer so that every curve on a fixed hyperbolic surface $(S,\rho)$ of length at most $L$ lifts to a simple curve on a cover of degree at most $f_\rho(L)$. This bound is independent of hyperbolic structure $\rho$, and improves on a recent bound of Gupta-Kapovich \cite{gupta-kapovich}. When $(S,\rho)$ is without punctures, using \cite{patel} we conclude asymptotically linear growth of $f_\rho$. When $(S,\rho)$ has a puncture, we obtain exponential lower bounds for $f_\rho$.} 
\end{abstract}

\maketitle

\section{introduction}

Let $S$ be a topological surface of finite type and negative Euler characteristic, and let $\rho$ be a complete hyperbolic metric on $S$. Let $\mathcal{C}(S)$ indicate the set of \emph{closed curves} on $S$, i.e.~the set of free homotopy classes of the image of immersions of $S^1$ into $S$. For $\gamma\in\mathcal{C}(S)$, let $\ell_\rho(\gamma)$ indicate the length of the $\rho$-geodesic representative of $\gamma$ on $S$, and let $\iota(\gamma,\gamma)$ indicate the geometric self-intersection number of $\gamma$. A closed curve $\gamma\in \mathcal{C}(S)$ is \emph{simple} when its self-intersection $\iota(\gamma,\gamma)$ is equal to zero.

It is a corollary of a celebrated theorem of Scott \cite{scott} that each closed curve $\gamma\in \mathcal{C}(S)$ lifts to a simple closed curve in some finite-sheeted cover (i.e.~$\gamma$ `lifts simply'). Recent work has focused on making Scott's result effective \cite{patel}. As such, for $\gamma\in \mathcal{C}(S)$, let $\deg(\gamma)$ indicate the minimum degree of a cover to which $\gamma$ lifts simply. 

We focus on two functions $f_\rho$ and $f_S$. Let the integer $f_S(n)$ be the minimum $d$ so that every curve $\gamma$ of self-intersection number $\iota(\gamma,\gamma)$ at most $n$ has degree $\deg(\gamma)$ at most $d$, and let the integer $f_\rho(L)$ be the minimum $d$ so that every curve $\gamma$ of $\rho$-length $\ell_\rho(\gamma)$ at most $L$ has degree $\deg(\gamma)$ at most $d$. Gupta-Kapovich have recently shown:
\begin{thm}\cite[Thm.~C, Cor.~1.1]{gupta-kapovich}
\label{gupta-kapovich}
{There are constants $C_1=C_1(\rho)$ and $C_2=C_2(S)$ so that $$f_{\rho}(L) \ge C_1 \cdot \left( \log L \right)^{1/3} \text{\ \ and \ \ }
f_S (n) \ge C_2 \cdot \left( \log n \right)^{1/3}.$$}
\end{thm}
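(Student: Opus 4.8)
The plan is to argue by counting (pigeonhole). By definition a curve $\gamma$ with $\deg(\gamma)\le d$ is the image, under a covering map $p\colon \tilde S\to S$ of degree at most $d$, of a \emph{simple} closed curve $\tilde\gamma$ on $\tilde S$, and $\gamma=p(\tilde\gamma)$ recovers $\gamma$ from the pair $(\tilde S,\tilde\gamma)$. Thus the assignment $\gamma\mapsto(\tilde S,\tilde\gamma)$ is injective, and it suffices to show that there are strictly more short curves on $S$ than there are short simple closed curves on all covers of degree $\le d$ combined: then some short curve fails to lift simply to any such cover, forcing $f_\rho(L)>d$ (resp.\ $f_S(n)>d$).

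I would assemble two counts. For the \emph{upper} bound on curves that do lift to degree $\le d$: first, the number of connected degree-$\le d$ covers of $S$ is at most the number of homomorphisms $\pi_1(S)\to\mathrm{Sym}(d)$, hence at most $(d!)^{r}=e^{O(d\log d)}$, where $r$ is the size of a generating set for $\pi_1(S)$. Second, on a fixed such cover $\tilde S$ I would bound the number of simple closed geodesics of length $\le T$ via normal coordinates with respect to a fixed geodesic triangulation of $S$ lifted to $\tilde S$: a simple geodesic in minimal position is determined by its nonnegative integer geometric intersection numbers with the $O(d)$ edges, and a length-$\le T$ geodesic crosses each edge $O(T)$ times, with a constant inherited from $S$ and hence uniform over all covers, so the number of such curves is at most $e^{O(d\log T)}$. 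Since a length-$\le L$ curve downstairs lifts to a simple geodesic of length $\le dL$, multiplying these gives at most $e^{O(d\log(dL))}$ curves of length $\le L$ with $\deg\le d$. For the \emph{lower} bound I invoke the prime geodesic theorem: the number of primitive closed geodesics of length $\le L$ is $\sim e^{L}/L$, hence at least $e^{(1-\varepsilon)L}$ for large $L$, and these are pairwise distinct free homotopy classes.

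Comparing, once $(1-\varepsilon)L>C_0\,d\log(dL)$ some length-$\le L$ curve must have degree exceeding $d$; taking $d\asymp L/\log L$ yields $f_\rho(L)\gtrsim L/\log L$, which dominates $C_1(\log L)^{1/3}$. To pass to $f_S$, fix any auxiliary hyperbolic metric $\rho_0$ and use the quadratic bound $\iota(\gamma,\gamma)\le C_{\rho_0}\,\ell_{\rho_0}(\gamma)^2$: every curve of $\rho_0$-length at most $\Lambda:=\sqrt{n/C_{\rho_0}}$ has self-intersection at most $n$, there are at least $e^{(1-\varepsilon)\Lambda}$ such curves, and the cover-count bounds those of degree $\le d$ by $e^{O(d\log(d\Lambda))}$; taking $d\asymp\Lambda/\log\Lambda\asymp\sqrt n/\log n$ gives $f_S(n)\gtrsim \sqrt n/\log n$, again dominating $C_2(\log n)^{1/3}$. (Constants may be absorbed to recover exactly the stated form, and the length cutoff is what keeps both comparison sets finite, since infinitely many curves satisfy $\iota\le n$.)

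The main obstacle is the uniform-over-covers count of bounded-length simple closed geodesics: the pigeonhole closes only because the normal-coordinate bound has its constants inherited from the base surface $S$, so the dependence on the cover is at worst exponential in its degree $d$ and polynomial in the length $T$, rather than growing uncontrollably with the cover's full topological complexity. Making this precise---choosing the triangulation, bounding edge-crossings of a geodesic by its length with a collar/systole constant that survives to every cover, and verifying that normal coordinates determine the isotopy class---is the technical heart. I note that this strategy in fact overshoots the stated $(\log)^{1/3}$ bound and already points toward (near-)linear growth; the only gap separating $L/\log L$ from a genuinely linear lower bound is the logarithmic loss in the cover count, whose removal is precisely what a sharper argument would need to address.
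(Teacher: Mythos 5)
This statement is quoted from Gupta--Kapovich and the paper gives no proof of it; the relevant comparisons are therefore (a) the cited source, which obtains the $(\log)^{1/3}$ bounds by analyzing the primitivity index of a random word in a free subgroup of $\pi_1S$, and (b) the paper's own Theorem \ref{main thm}, which supersedes the statement by exhibiting the explicit curves $\gamma_n=ab^n$ in a pair of pants and showing combinatorially that $\deg(\gamma_n)\ge n+1$. Your pigeonhole argument is a genuinely third route, and I believe it is sound: the injectivity of $\gamma\mapsto(p\colon\tilde S\to S,\,\tilde\gamma)$, the $e^{O(d\log d)}$ count of degree-$\le d$ covers, the $e^{O(d\log T)}$ count of simple closed geodesics of length $\le T$ on a cover via normal coordinates of a \emph{lifted} triangulation (the key point, as you say, being that the crossings-per-unit-length constant is local and hence uniform over all covers --- indeed the number of crossings of $\tilde\gamma$ with all lifts of an edge $e$ equals the number of crossings of $p\circ\tilde\gamma$ with $e$ downstairs), the bound $\ell(\tilde\gamma)\le d\,\ell(\gamma)$ for an elevation, and the prime geodesic theorem all fit together as claimed, with only routine care needed about normal coordinates determining the isotopy class up to a bounded factor. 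This yields $f_\rho(L)\gtrsim L/\log L$ and, via $\iota(\gamma,\gamma)\le C_{\rho_0}\ell_{\rho_0}(\gamma)^2$, $f_S(n)\gtrsim\sqrt n/\log n$, both of which comfortably imply the stated $(\log)^{1/3}$ bounds. What the counting argument buys is robustness and strength well beyond the quoted theorem at low technical cost; what it does not buy is what the paper's explicit-curve method achieves: a truly linear bound $f_S(n)\ge n+1$ and $f_\rho(L)\ge L/(B+\epsilon)$ with a constant independent of $\rho$, together with concrete witnesses $\gamma_n$ realizing the bound. Your own closing remark correctly identifies the logarithmic loss in the cover count as the obstruction to recovering linearity by counting alone.
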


Their work analyzed the `primitivity index' of a `random' word in the free group, exploiting the many free subgroups of $\pi_1S$ (e.g.~subgroups corresponding to incompressible three-holed spheres, or \emph{pairs of pants}) to obtain the above result. We also exploit the existence of free subgroups of $\pi_1S$, but instead of following in their delicate analysis of random walks in the free group, we analyze explicit curves on $S$. The chosen curves are sufficiently uncomplicated to allow a straightforward analysis of the degree of any cover to which the curves lift simply. As a consequence, we provide the improved lower bounds:

\begin{thm}
\label{main thm}
{We have $f_S(n) \ge n+1$. Moreover, Let $B=B(S)$ be a Bers constant for $(S,\rho)$, and let $\epsilon>0$. Then there is an $L_0=L_0(\rho,\epsilon)$ so that, for any $L \ge L_0$,
$$f_{\rho}(L) \ge \frac{L}{B+\epsilon}.$$}
\end{thm}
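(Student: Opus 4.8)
The plan is to prove both statements from a single explicit family of curves: the proper powers $\alpha^k$ of a fixed simple closed geodesic $\alpha$. These are about as uncomplicated as curves get, yet the power forces any cover that straightens $\alpha^k$ into a simple curve to be large. The whole theorem will reduce to one geometric lemma, that $\deg(\alpha^k)\ge k$, together with the elementary fact that the minimal self-intersection of a $k$-th power of a simple curve is $\iota(\alpha^k,\alpha^k)=k-1$ (seen by drawing $\alpha^k$ in an annular neighborhood of $\alpha$).

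For the degree bound I would argue as follows. Suppose $p\colon \tilde S\to S$ is a cover of degree $d$ to which $\gamma=\alpha^k$ lifts to a simple closed curve $\tilde\gamma$. Passing to geodesic representatives, the $\rho$-geodesic representing $\gamma$ has image the single geodesic $\alpha$ traversed $k$ times; since geodesics lift to geodesics, the geodesic $\tilde\gamma$ therefore has image a single component $C$ of $p^{-1}(\alpha)$. Let $m=\deg(p|_C)$ be the degree with which the closed geodesic $C$ covers $\alpha$, so that $m\le d$. Now $p\circ\tilde\gamma=\gamma$ covers $\alpha$ exactly $k$ times, while $\tilde\gamma$ traverses the circle $C$ some number $j$ of times and $C$ covers $\alpha$ with degree $m$; hence $jm=k$. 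Embeddedness of $\tilde\gamma$ forces $j=1$, since a $j$-fold traversal of a circle with $j>1$ is not simple. Therefore $m=k$ and $d\ge m=k$, which is the lemma.

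Granting the lemma, the first statement is immediate: fix any simple closed geodesic $\alpha$ and set $\gamma=\alpha^{n+1}$, so that $\iota(\gamma,\gamma)=n$ while $\deg(\gamma)\ge n+1$, and by definition of $f_S$ this yields $f_S(n)\ge n+1$. For the length bound I would invoke the Bers constant $B=B(S)$ to fix a simple closed geodesic $\alpha$ (a cuff of a Bers pants decomposition) with $\ell_\rho(\alpha)\le B$. Given $L\ge L_0$, set $k=\lfloor L/\ell_\rho(\alpha)\rfloor$ and $\gamma=\alpha^k$; then $\ell_\rho(\gamma)=k\,\ell_\rho(\alpha)\le L$, so $f_\rho(L)\ge\deg(\gamma)\ge k\ge L/\ell_\rho(\alpha)-1\ge L/B-1$. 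A one-line estimate shows $L/B-1\ge L/(B+\epsilon)$ as soon as $L\ge B(B+\epsilon)/\epsilon=:L_0$, which gives the claimed inequality with $L_0=L_0(\rho,\epsilon)$.

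The main obstacle is the geometric lemma, specifically the clean bookkeeping that a simple lift of $\alpha^k$ must unwrap the power completely, forcing a single component of $p^{-1}(\alpha)$ to cover $\alpha$ with degree exactly $k$. Everything else is routine: the self-intersection count of a power, and the manipulation of the floor function against $\epsilon$. I would also verify the edge cases that make the lemma's proof airtight, namely that $\alpha$ simple guarantees its geodesic is primitive and embedded, that the geodesic representative of the lift $\tilde\gamma$ indeed lies over $\alpha$ and covers it with total degree $k$, and that selecting $\alpha$ as a cuff of a Bers pants decomposition legitimately produces $\ell_\rho(\alpha)\le B$ with $B$ depending only on the topology of $S$.
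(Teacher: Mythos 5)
Your degree computation for powers is internally sound: if $\alpha^k$ lifts to a simple closed curve in a degree-$d$ cover, then the lift's geodesic lies over a single component $C$ of $p^{-1}(\alpha)$, a proper power of the core of $C$ is never simple, so $C\to\alpha$ has degree exactly $k$ and $d\ge k$. The problem is the choice of curve family. The classes $\alpha^k$ are proper powers of a simple closed curve, and these are not legitimate test curves for $f_S$ and $f_\rho$ as the paper (and the surrounding literature) sets things up. The paper defines $\mathcal{C}(S)$ as free homotopy classes of the \emph{image} of immersions of $S^1$, under which the class of $\alpha^k$ is indistinguishable from that of $\alpha$ itself --- a simple curve of degree $1$ --- so your examples contribute nothing to $f_S(n)$ or $f_\rho(L)$. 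Even if one reads the definition as allowing non-primitive classes, the surrounding results show that this cannot be the intended convention: Basmajian's universal length bounds \cite{basmajian}, which the paper invokes, assert that a closed geodesic with self-intersection number $n$ has length bounded below by a universal constant, and $\alpha^2$ (self-intersection $1$, length $2\ell_\rho(\alpha)$, arbitrarily short after pinching $\alpha$) would falsify this; likewise, if powers counted, Theorem~\ref{gupta-kapovich} with its $(\log n)^{1/3}$ bound obtained via a delicate random-walk analysis would be a triviality. The convention throughout is that the curves quantified over are primitive (equivalently, their geodesic representatives are primitive geodesics in general position), and your $\gamma=\alpha^{n+1}$ falls outside this class. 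Note also the related degeneracy that the geodesic representative of $\alpha^k$ does not realize the putative self-intersection number $k-1$: it is a $k$-fold cover of an embedded geodesic with no transverse double points at all.

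The paper's proof has the same overall architecture as yours --- produce one explicit family of curves with $\iota=n$, $\deg\ge n+1$, and length controlled by the Bers constant, then run the same floor-function bookkeeping --- but it uses the \emph{primitive} curves $\gamma_n=a\cdot b^n$ in a pair of pants $P\subset S$ taken from a Bers pants decomposition. There $\iota(\gamma_n,\gamma_n)=n$ is an honest count of transverse double points (Lemma~\ref{gamma non-simple}), and the bound $\deg(\gamma_n)\ge n+1$ (Proposition~\ref{good curves}) requires an actual combinatorial argument: in any degree-$k$ cover of $P$ with $k\le n$, a simple lift of $\gamma_n$ would sit inside an incompressible sub-pair-of-pants as the class $a\cdot b^{\lfloor n/k\rfloor}$, which is again non-simple. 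The length estimate $\ell_\rho(\gamma_n)\le B(1+n)+2D$ then plays the role of your $k\,\ell_\rho(\alpha)\le kB$. To repair your argument you would need to replace the powers $\alpha^k$ by a primitive family such as this one; the rest of your reduction (restricting the cover to the preimage component of $P$, and the $\epsilon$-manipulation at the end) is fine.
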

Recall the theorem of Bers \cite{bers}: There is a constant $B=B(S)$ so that, for every hyperbolic metric $\rho$ on $S$, there is a maximal collection of disjoint simple curves on $S$ with each curve of $\rho$-length at most $B$. Such a constant $B$ is called a \emph{Bers constant}, and such a collection of curves is called a \emph{Bers pants decomposition} for $(S,\rho)$. It is interesting to note that the constant in the lower bound for $f_\rho$ in Theorem \ref{main thm} is independent of the metric $\rho$.\vspace{.1cm}

\begin{remark}{The proof of Theorem \ref{main thm} follows from an analysis of an explicit sequence of curves $\{\gamma_n\}$. These curves are also analyzed by Basmajian \cite{basmajian}, where it is shown that they are in some sense the `shortest' curves of a given intersection number: The infimum of the length function $\ell(\gamma_n)$ on the Teichm\"{u}ller space of $S$ is asymptotically the minimum possible among curves with self-intersection $\iota(\gamma_n,\gamma_n)$ \cite[Cor.~1.4]{basmajian}.\vspace{.1cm}}
\end{remark}

Combined with work of Patel \cite[Thm.~1.1]{patel} (see the comment of \cite[p.~1]{gupta-kapovich}), in many cases Theorem \ref{main thm} implies a determination of the order of growth of $f_\rho$. We have:

\begin{cor}[Linear growth of $f_\rho$]
\label{main cor}
{Suppose $(S,\rho)$ is without punctures. There exist constants $C_1=C_1(S)$, $C_2=C_2(\rho)$, and $L_0=L_0(\rho)$ so that, for any $L \ge L_0$,
$$C_1\cdot L \le f_\rho (L) \le C_2\cdot L.$$}
\end{cor}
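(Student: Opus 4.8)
The plan is to read off the two inequalities from independent inputs: the left-hand bound $C_1 L \le f_\rho(L)$ is precisely the content of Theorem \ref{main thm}, while the right-hand bound $f_\rho(L) \le C_2 L$ repackages Patel's effective form of Scott's theorem \cite[Thm.~1.1]{patel}. No new argument is needed beyond assembling these and tracking the dependence of the constants.

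For the lower bound I would fix $\epsilon$ once and for all, say $\epsilon = B$ where $B = B(S)$ is a Bers constant, and apply Theorem \ref{main thm}. This produces an $L_0 = L_0(\rho,\epsilon)$ with $f_\rho(L) \ge L/(B+\epsilon) = L/(2B)$ for every $L \ge L_0$. Since $B$ depends only on $S$ and $\epsilon$ is now a fixed function of $S$, the constant $C_1 := 1/(2B)$ depends only on $S$, and $L_0 = L_0(\rho)$, as claimed.

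For the upper bound I would invoke Patel's theorem in the punctureless case, which supplies a constant $C_2 = C_2(\rho)$ such that $\deg(\gamma) \le C_2 \cdot \ell_\rho(\gamma)$ for every closed geodesic $\gamma$ on $(S,\rho)$. Since $f_\rho(L)$ is the maximum of $\deg(\gamma)$ over the finitely many closed geodesics with $\ell_\rho(\gamma) \le L$, applying this bound to each such $\gamma$ gives $\deg(\gamma) \le C_2 \ell_\rho(\gamma) \le C_2 L$, whence $f_\rho(L) \le C_2 L$; enlarging $L_0$ if necessary, both inequalities then hold for all $L \ge L_0$.

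There is no serious obstacle in the assembly---the mathematical content lives entirely in Theorem \ref{main thm} and in \cite{patel}. The only points requiring care are the bookkeeping of constants (checking that $C_1$ genuinely depends only on $S$, and $C_2, L_0$ only on $\rho$) and the observation that the punctureless hypothesis is essential: Patel's linear-in-length bound is special to closed surfaces, and indeed the exponential lower bounds for $f_\rho$ in the cusped case (mentioned in the abstract) preclude any upper bound of the form $f_\rho(L) \le C_2 L$ there.
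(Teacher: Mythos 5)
Your proposal is correct and matches the paper's (implicit) argument exactly: the paper derives this corollary by combining the lower bound of Theorem \ref{main thm} (with $\epsilon$ fixed so that $C_1$ depends only on $S$) with the linear upper bound on $\deg(\gamma)$ from \cite[Thm.~1.1]{patel}, which is precisely your assembly. No differences worth noting.
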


Recall that, when $S$ has boundary, we say that $(S,\rho)$ has a \emph{puncture} if the closed curve homotopic to a boundary component has no geodesic representative. When $(S,\rho)$ does have a puncture, we are not aware of upper bounds for $f_\rho$. In fact, the hypothesis above is essential. We show:

\begin{thm}
\label{punctured thm}
{Suppose $(S,\rho)$ is a hyperbolic surface with a puncture. For any $\epsilon>0$, there is $L_0=L_0(\rho,\epsilon)$ so that, for any $L\ge L_0$, $$f_\rho(L) \ge e^{\frac{L}{2+\epsilon}}.$$}
\end{thm}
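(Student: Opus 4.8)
The plan is to exhibit an explicit family of curves $\{\gamma_n\}$ concentrated near the puncture whose degree grows linearly in $n$ while whose $\rho$-length grows only like $2\log n$; inverting this relationship produces the claimed exponential bound. Concretely, I would reuse the curves $\gamma_n$ underlying Theorem \ref{main thm}, which wrap $n$ times around a fixed simple closed curve and for which one establishes the purely topological estimate $\deg(\gamma_n) \ge n+1$. Since $\deg(\gamma_n)$ depends only on the free homotopy class of $\gamma_n$ and not on $\rho$, this lower bound persists after relocating the construction. The new input in the punctured setting is that the simple curve being wrapped is taken to be a loop $p$ encircling the cusp, so that the $n$-fold wrapping can be performed with only logarithmically small length.

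The heart of the argument is the length estimate $\ell_\rho(\gamma_n) \le 2\log n + C$ for a constant $C=C(\rho)$. I would prove this by building an explicit (non-geodesic) representative inside a standard cusp neighborhood and bounding its length from above; as the geodesic representative is shortest, this suffices. Normalizing the cusp as $\{\im z \ge 1\}/\langle z \mapsto z+1\rangle$ in $\H$, a representative of $\gamma_n$ ascends from the thick part to height $h$, travels horizontal $x$-distance $n$ (wrapping $n$ times around the cusp, a segment of length $n/h$), descends again, and closes up through a fixed arc of bounded length $C_0$. Its total length is at most $2\log h + n/h + C_0$, and the choice $h = n/2$ gives $2\log n + O(1)$. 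Equivalently, if $p$ is the parabolic encircling the cusp and $w$ is a fixed hyperbolic element, then $\gamma_n$ is freely homotopic to $w p^n$, whose trace grows linearly in $n$, so that its translation length is $2\,\mathrm{arccosh}(|\mathrm{tr}|/2) = 2\log n + O(1)$.

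With both estimates in hand I would invert. Given $L$, let $n$ be the largest integer with $2\log n + C \le L$, so that $\ell_\rho(\gamma_n) \le L$ and hence $f_\rho(L) \ge \deg(\gamma_n) \ge n+1 > e^{(L-C)/2}$. Since $e^{(L-C)/2} \ge e^{L/(2+\epsilon)}$ precisely when $L\bigl(\tfrac12 - \tfrac{1}{2+\epsilon}\bigr) \ge C/2$, i.e.\ when $L \ge C(2+\epsilon)/\epsilon$, setting $L_0 = L_0(\rho,\epsilon) := C(2+\epsilon)/\epsilon$ yields the theorem for all $L \ge L_0$.

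I expect the main obstacle to be the length upper bound: one must verify that relocating $\gamma_n$ into the cusp neighborhood does not alter its free homotopy class (hence preserves the degree bound $\ge n+1$), and that the factor $2$ in $2\log n$ is genuine, since it is exactly this constant that is transmitted to the exponent $L/(2+\epsilon)$. The additive error $O(1)$ and the loss coming from the $\epsilon$-slack are harmless, being absorbed into the threshold $L_0$.
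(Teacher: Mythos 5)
Your proposal is correct and follows essentially the same route as the paper: reuse the topological bound $\deg(\gamma_n)\ge n+1$ for $\gamma_n=ab^n$ with $b$ the loop around the cusp, and bound $\ell_\rho(\gamma_n)\le 2\log n+O(1)$ by an explicit path that ascends to height $\approx n$ in the cusp, travels horizontal distance $n$, descends, and closes up along a fixed arc. The only cosmetic difference is that the paper closes up along the geodesic cuff $\alpha$ (and therefore treats the thrice-punctured sphere, where no cuff is hyperbolic, as a separate case via an explicit trace computation), whereas your ``fixed arc of bounded length'' and trace-growth argument handle that case uniformly.
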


This theorem indicates at least that Patel's upper bounds cannot hold in the punctured setting: If $(S,\rho)$ is a hyperbolic surface with a puncture, the minimal degree of a cover to which a given curve $\gamma$ lifts to a simple curve cannot be bounded linearly in the curve's length $\ell_\rho(\gamma)$.

There are other avenues for further investigation. It would be natural to seek upper bounds for $f_S(n)$ (cf.~\cite[p.~15]{rivin}), since no such bound follows from \cite{patel}. One might also investigate whether the constant $C_2$ in the upper bound in Corollary \ref{main cor} can be made independent of $\rho$, as with the lower bound. Finally, one could explore the set of curves of self-intersection number exactly $n$. For instance: Among the finitely many mapping class group orbits of curves $\gamma$ with self-intersection $n$, which maximize $\deg(\gamma)$? 

\subsection*{Outline of the paper} In \S\ref{degree section} we introduce a sequence of curves $\{\gamma_n\}$ on a pair of pants and analyze the degrees $\deg(\gamma_n)$, and in \S\ref{proofs section} we deduce Theorem \ref{main thm} and Theorem \ref{punctured thm} as straightforward consequences.

\subsection*{Acknowledgements}
The author gratefully acknowledges inspiration from Neha Gupta, inspiration and comments from Ilya Kapovich, and helpful comments from, and conversations with, Tarik Aougab, Ian Biringer, Martin Bridgeman, David Dumas, Peter Feller, Brice Loustau, and Priyam Patel. 

\section{analysis of a certain curve family}
\label{degree section}

Let $P_0$ be a pair of pants. Identify $\pi_1(P_0,p)$ with a rank-2 free group $F$, with generators $a$ and $b$ as pictured in Figure \ref{P0}. Let $\gamma_n$ indicate the closed curve given by the equivalence class of $a\cdot b^n$. 

\begin{figure}[h]
	\centering
	\Large
	\begin{lpic}{P0(9.5cm)}
		\lbl[]{127.5,53;$p$}
		\lbl[]{-7,52;$a$}
		\lbl[]{223,52.5;$b$}
	\end{lpic}
	\caption{The pair of pants $P_0$, with generators $a$ and $b$.}
	\label{P0}
\end{figure}

\begin{figure}[h]
	\centering
	\includegraphics[width=9.5cm]{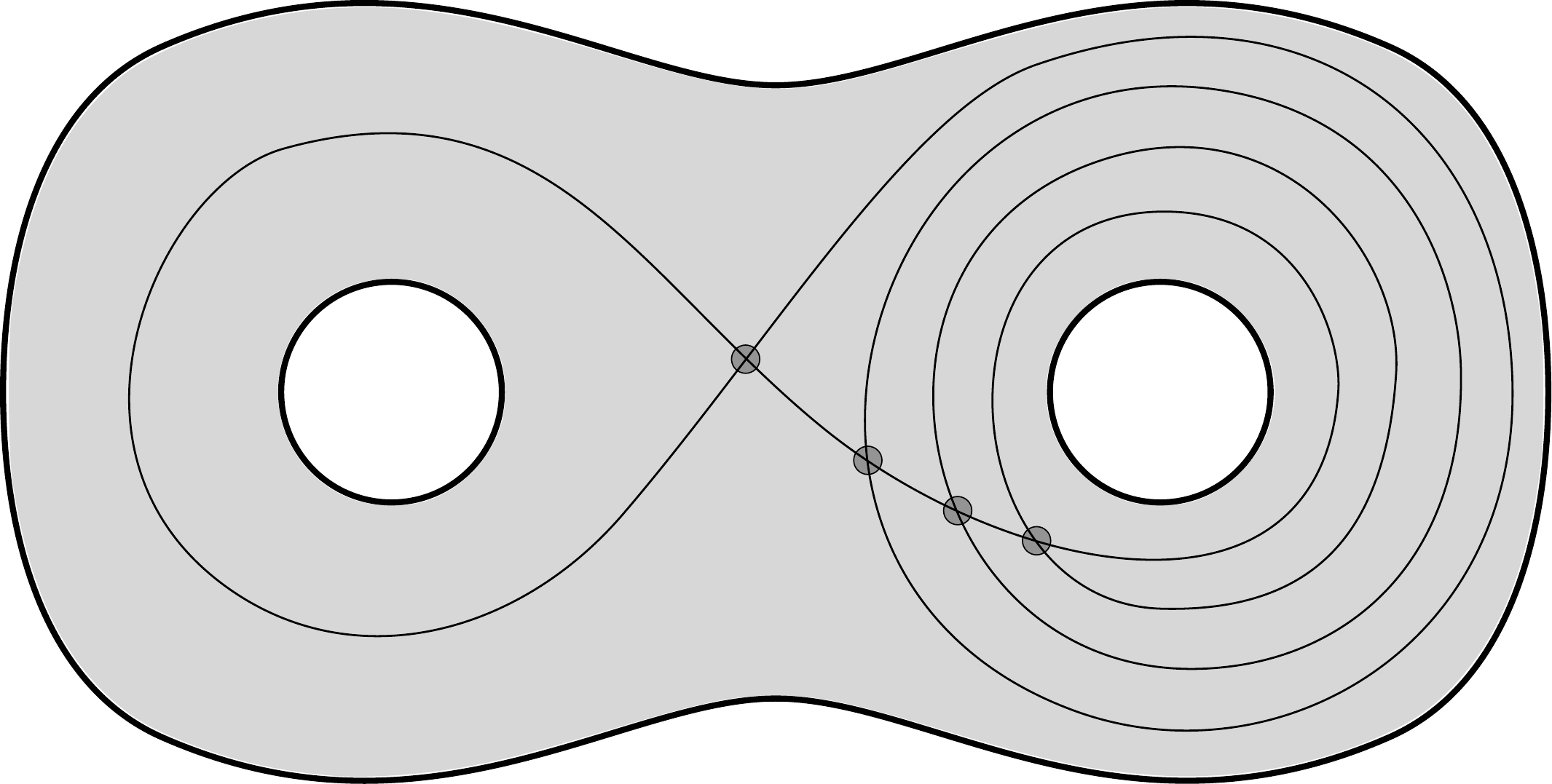}
	\caption{A minimal position representative of the curve $\gamma_4$.}
	\label{gamma_n}
\end{figure}

The following lemma is neither new (see \cite[Prop.~4.2]{basmajian}) nor surprising (see Figure \ref{gamma_n}), but we include a sketch of a proof for completeness: 

\begin{lemma}
\label{gamma non-simple}
{For $n\ge0$, the curve $\gamma_n$ has $\iota(\gamma_n,\gamma_n)=n$.}
\end{lemma}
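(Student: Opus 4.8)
The plan is to prove the two inequalities $\iota(\gamma_n,\gamma_n)\le n$ and $\iota(\gamma_n,\gamma_n)\ge n$ separately: the first by drawing an explicit immersed representative with exactly $n$ double points, and the second by certifying that this representative already minimizes self-intersection within its free homotopy class.

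For the upper bound I would use the representative shown in Figure \ref{gamma_n}. Viewing $P_0$ as a disc with two subdiscs removed and $a,b$ as the generating loops of Figure \ref{P0}, the class $a\cdot b^n$ is carried by a loop whose sub-arc representing $b^n$ is an embedded bundle of $n$ mutually disjoint, essentially parallel strands winding around one cuff, together with a single sub-arc representing the $a$-excursion that winds around the other cuff and reconnects the two ends of the bundle. Since the $n$ parallel strands do not meet one another, every double point lies on the reconnecting $a$-arc, which enters the bundle on one side and exits on the other and so meets each of the $n$ strands exactly once. This exhibits a representative with precisely $n$ transverse double points, giving $\iota(\gamma_n,\gamma_n)\le n$.

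The lower bound is the crux, and I expect the minimality verification to be the only real obstacle: producing an $n$-crossing representative is routine, whereas excluding some cleverer homotopic representative with fewer crossings requires an honest argument. I would show that the representative above is already in minimal position. The most robust route is the bigon criterion of Hass and Scott: a generic closed curve realizes its geometric self-intersection number once its complement contains no singular monogon and no singular bigon. One checks that the complementary regions of the pictured representative consist of collar neighborhoods of the three boundary components together with the ``rectangular'' regions cut out between consecutive strands of the bundle, none of which is a monogon or a bigon. Equivalently, one may invoke the fact that $a\cdot b^n$ is cyclically reduced and primitive in $F=\langle a,b\rangle$ and compute, via the pair-of-pants ribbon (cyclic-ordering) structure at the basepoint, that the cyclic word has exactly $n$ linked pairs of turns at $p$; or, most cleanly, one may place a complete hyperbolic metric on $P_0$ and observe that the representative is freely homotopic to the geodesic, which is a minimal-position representative and hence attains $\iota$. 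Any of these certifications gives $\iota(\gamma_n,\gamma_n)\ge n$, and combining the two inequalities yields $\iota(\gamma_n,\gamma_n)=n$.
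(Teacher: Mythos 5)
Your overall strategy is the same as the paper's: exhibit the $n$-crossing representative of Figure \ref{gamma_n} for the upper bound, and certify minimal position via a bigon criterion for the lower bound. Two caveats on your certification step, though. First, the criterion you need is the one for \emph{singular} (immersed) bigons and monogons, which you correctly name, but your proposed verification -- inspecting the embedded complementary regions and noting none is a bigon -- does not actually rule out singular bigons: for a non-simple curve a bigon can be immersed without appearing as an embedded complementary disc. The paper's check is the right one: for every pair of double points, every concatenation of two connecting subarcs of the curve is an essential loop, hence no immersed bigon exists. Your alternative via counting linked pairs of turns in the cyclic word at $p$ would also work. Second, your ``cleanest'' alternative -- putting a hyperbolic metric on $P_0$ and noting the representative is freely homotopic to the geodesic -- is circular as stated: every representative is freely homotopic to the geodesic, and the issue is precisely whether your chosen representative has no more crossings than the geodesic does. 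So the proposal is essentially the paper's argument, but the minimality verification should be repaired along the lines above before it is complete.
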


\begin{proof}[Proof sketch]
{It is not hard to pick a representative of $\gamma_n$ that has self-intersection $n$, so that $\iota(\gamma_n,\gamma_n)\le n$ (see Figure \ref{gamma_n} for $n=4$). On the other hand, it is also not hard to check that there are no immersed bigons for this chosen representative of $\gamma_n$: For every pair of intersection points, the concatenation of any pair of arcs of $\gamma_n$ that connect the two points forms an essential curve. The `bigon criterion' of \cite[\S1.2.4]{farb-margalit} can be altered straightforwardly to an `immersed bigon criterion' in the setting of curves with self-intersections, and so the lack of immersed bigons guarantees that the chosen representative of $\gamma_n$ is in minimal position.}
\end{proof} 

\begin{figure}[h]
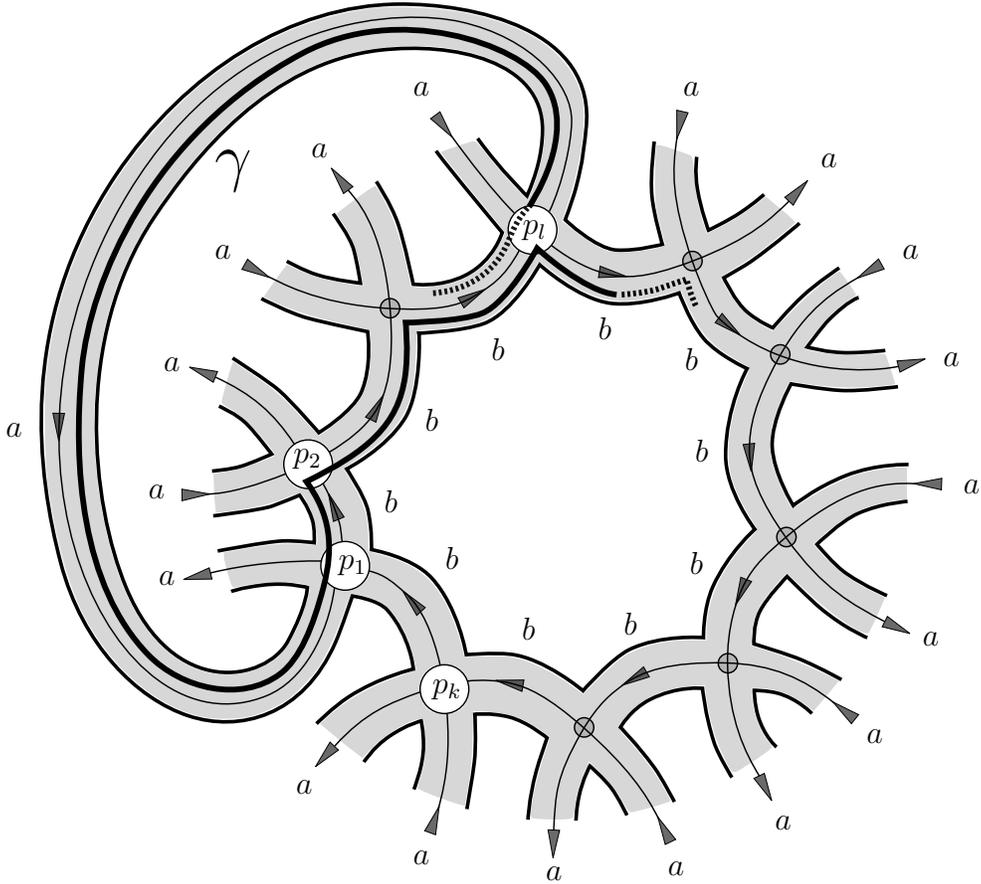

	\hspace{-.3cm}
	\centering
	\large
	\begin{lpic}{coverP3(12cm)}
		\lbl[]{61.5,57.5;$p_1$}
		\lbl[]{52.6,78.4;$p_2$}
		\lbl[]{97.2,123.3;$p_l$}
		\lbl[]{80,33;$p_k$}
		\lbl[]{-5,84;$a$}
		\lbl[]{81,59;$b$}
		\lbl[]{69,70;$b$}
		\lbl[]{77,86;$b$}
		\lbl[]{90,100;$b$}
		\lbl[]{111,104;$b$}
		\lbl[]{128,98;$b$}
		\lbl[]{130,80;$b$}
		\lbl[]{129,58;$b$}
		\lbl[]{116,46;$b$}
		\lbl[]{96,45;$b$}
		\lbl[]{25,55;$a$}
		\lbl[]{23,72;$a$}
		\lbl[]{26,97;$a$}
		\lbl[]{36,119;$a$}
		\lbl[]{55,139;$a$}
		\lbl[]{75,151;$a$}
		\lbl[]{128,151;$a$}
		\lbl[]{155,137;$a$}
		\lbl[]{171,119;$a$}
		\lbl[]{179,98;$a$}
		\lbl[]{183,73;$a$}
		\lbl[]{175,43;$a$}
		\lbl[]{164,24;$a$}
		\lbl[]{146,7;$a$}
		\lbl[]{125,-2;$a$}
		\lbl[]{101,-3;$a$}
		\lbl[]{75,0;$a$}
		\lbl[]{52,14;$a$}
		\Huge
		\lbl[]{38,135;$\gamma$}
	\end{lpic}		
	\vspace{.5cm}
	\caption{A supposedly simple lift $\gamma$ of $\gamma_n$ to the cover $P'\to P_0$, where $p_1$ is the point above $p$ that follows the unique $a$ edge of $\gamma$.}
	\label{coverP}
\end{figure}

We use Lemma \ref{gamma non-simple} to estimate $\deg(\gamma_n)$, a calculation reminiscent of \cite[Lemma 3.10]{gupta-kapovich}. The following proposition is the main tool in our analysis.

\begin{figure}
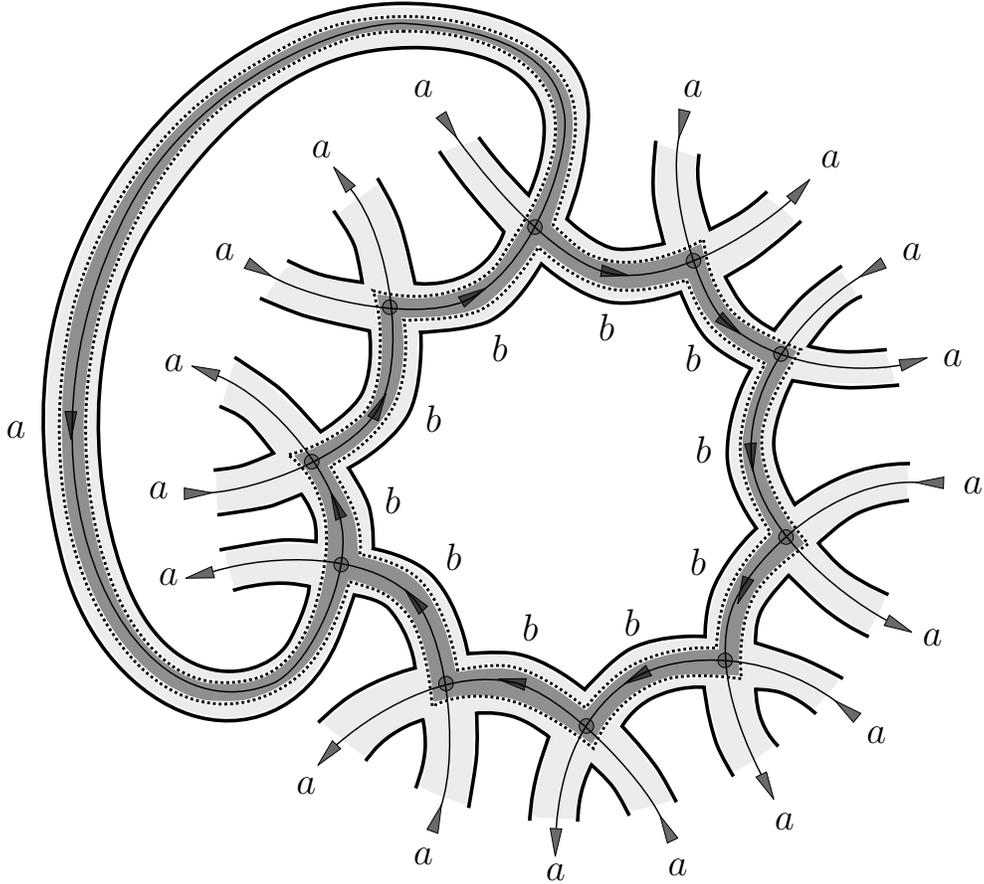

	\centering
	\Large
	\begin{lpic}{coverPwP0(12cm)}
		\lbl[]{-5,84;$a$}
		\lbl[]{81,59;$b$}
		\lbl[]{69,70;$b$}
		\lbl[]{77,86;$b$}
		\lbl[]{90,100;$b$}
		\lbl[]{111,104;$b$}
		\lbl[]{128,98;$b$}
		\lbl[]{130,80;$b$}
		\lbl[]{129,58;$b$}
		\lbl[]{116,46;$b$}
		\lbl[]{96,45;$b$}
		\lbl[]{25,55;$a$}
		\lbl[]{23,72;$a$}
		\lbl[]{26,97;$a$}
		\lbl[]{36,119;$a$}
		\lbl[]{55,139;$a$}
		\lbl[]{75,151;$a$}
		\lbl[]{128,151;$a$}
		\lbl[]{155,137;$a$}
		\lbl[]{171,119;$a$}
		\lbl[]{179,98;$a$}
		\lbl[]{183,73;$a$}
		\lbl[]{175,43;$a$}
		\lbl[]{164,24;$a$}
		\lbl[]{146,7;$a$}
		\lbl[]{125,-2;$a$}
		\lbl[]{101,-3;$a$}
		\lbl[]{75,0;$a$}
		\lbl[]{52,14;$a$}
	\end{lpic}
	\vspace{.2cm}
	\caption{An incompressible pair of pants $P''\subset P'$ contains $\gamma$.}
	\label{coverPwP0}
\end{figure}

\begin{prop}
\label{good curves}
{We have $\deg(\gamma_n)\ge n+1$.}
\end{prop}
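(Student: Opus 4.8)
The plan is to pass to the cover graph and count vertices. Suppose $\gamma_n$ lifts to a simple closed curve $\gamma$ in a cover $\pi\colon P'\to P_0$ of degree $d$; since $ab^n$ is not a proper power, I may take $\gamma$ to be the component of $\pi^{-1}(\gamma_n)$ mapping homeomorphically onto $\gamma_n$. Deformation retracting $P_0$ onto the wedge of the loops $a$ and $b$ and pulling back under $\pi$, the cover $P'$ retracts onto a graph $G$ whose vertices are the fiber $\pi^{-1}(p)=\{p_1,\dots,p_d\}$ and in which each vertex has a unique outgoing and a unique incoming edge of each label. Tightening $\gamma$ into $G$ (Figure~\ref{coverP}), it becomes a closed edge-loop spelling the cyclic word $ab^n$, visiting vertices $v_0,v_1,\dots,v_n$ in turn, where the step $v_0\to v_1$ reads $a$, each remaining step $v_i\to v_{i+1}$ reads $b$, and indices are cyclic with $v_{n+1}=v_0$. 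The goal is to prove that $v_0,\dots,v_n$ are pairwise distinct, for then $d\ge n+1$.

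The key is the combinatorial description of self-intersections. After tightening, the crossings of $\gamma$ occur only at vertices visited more than once, and two visits to a common vertex cross exactly when their transition chords are linked in the fixed cyclic order of the four edge-ends $a_{in},a_{out},b_{in},b_{out}$; this cyclic order is the same at every vertex, since $\pi$ is a local homeomorphism. I would single out the transition $\beta\colon b_{in}\to a_{out}$ occurring at $v_0$, and read off from the chord diagram of $\gamma_n$ at $p$ that $\beta$ is linked with each of the other $n$ transitions of $\gamma$, so that $\beta$ alone accounts for all $n$ self-intersections counted in Lemma~\ref{gamma non-simple}. The base case of this computation is $\iota(\gamma_1,\gamma_1)=1$, which says precisely that $\beta$ is linked with the transition $\alpha\colon a_{in}\to b_{out}$.

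Granting that $\beta$ links every other transition, the count is quick. First, $v_0$ must differ from every $v_j$ with $1\le j\le n$: otherwise $\gamma$ visits a common vertex along $\beta$ and along some other transition, and these then cross, contradicting simplicity of $\gamma$. Second, there can be no coincidence among $v_1,\dots,v_n$ either: a repetition $v_i=v_j$ with $i<j$ forces $\gamma$ to reuse the outgoing $b$-edge of that vertex, and following the two parallel $b$-strands forward until each reaches the $a$-transition at $v_0$ shows $v_0=v_m$ for some $m\in\{1,\dots,n\}$, which the first point has just excluded. Hence all $n+1$ vertices are distinct and $d\ge n+1$. The incompressible pair of pants $P''\subset P'$ of Figure~\ref{coverPwP0} is the geometric avatar of this count: the simple curve $\gamma$ is confined to a single subsurface, localizing the crossing analysis. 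I expect the chord-diagram computation---that $\beta$ is linked with all of $\alpha$ and the $b\to b$ transitions---to be the only real obstacle; everything else is bookkeeping in $G$.
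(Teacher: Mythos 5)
Your reduction of the problem to the single claim ``$v_0\ne v_j$ for $1\le j\le n$'' is correct and cleanly argued: the forward-tracing step showing that a repetition among $v_1,\dots,v_n$ propagates along the unique outgoing $b$-edges to force $v_0=v_m$ is sound bookkeeping. But the claim you have isolated as ``the only real obstacle'' is a genuine gap, not a routine verification, and the linking criterion you propose does not close it as stated. The transition $\beta\colon b_{in}\to a_{out}$ and a transition $b_{in}\to b_{out}$ \emph{share the edge-end} $b_{in}$, so whether the corresponding chords are linked is not determined by the cyclic order of the four edge-ends at the vertex: it depends on the relative order of the two strands inside the $b$-band as they arrive, which is a global feature of the lift, not a local one. (Concretely, in the cyclic order $a_{out},a_{in},b_{in},b_{out}$ appropriate to Figure~\ref{P0}, the chord from $b_{in}$ to $a_{out}$ crosses the chord from $b_{in}$ to $b_{out}$ for one ordering of the two $b_{in}$ entry points and misses it for the other.) Even in the one case where the two transitions have four distinct edge-ends ($\beta$ versus $\alpha$ at $v_0=v_1$), a linked pair of chords only produces a crossing of the carried representative; you still must argue that this crossing is essential, i.e.\ cannot be removed by a homotopy of the closed curve in the cover, which is again not a purely local statement.

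It is worth seeing how the paper closes exactly this gap. Rather than arguing crossing-by-crossing at a repeated vertex, it observes that if the degree is $k\le n$ then $\gamma$ is carried by the subgraph consisting of the $b$-cycle through $v_1$ together with the single $a$-edge, whose regular neighborhood is an incompressible embedded pair of pants $P''\subset P'$ (Figure~\ref{coverPwP0}); under an identification of $P''$ with $P_0$ the lift becomes $a\cdot b^{s}$ with $s=\lfloor n/k\rfloor\ge 1$, which is non-simple by Lemma~\ref{gamma non-simple}. In other words, the ``forced crossing'' is obtained by reducing to the already-established base case rather than by a chord-diagram computation. If you want to keep your vertex-counting formulation, the most economical fix is to replace your linking argument for ``$v_0\ne v_j$'' with this subsurface reduction (or with an explicit no-immersed-bigon argument for the two offending subarcs); as written, the central step of your proof is asserted rather than proved.
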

\begin{proof}
{Towards contradiction, suppose there is a cover $P'\to P_0$ of degree $k \le n$, so that $\gamma_n$ lifts to a simple curve $\gamma$. Draw $P_0$ as a directed ribbon graph with one vertex $p$ and the two edges labeled by $a$ and $b$, and $P'$ as a directed ribbon graph with vertices $p_1,\ldots,p_k$ and $2k$ directed edges, $k$ with $a$ labels and $k$ with $b$ labels. Choose an orientation for $\gamma$ so that $\gamma$ consists of a directed $a$ edge followed by $n$ directed $b$ edges. After relabeling, we may assume that the unique $a$ edge of $\gamma$ is followed by $p_1$.

Starting from $p_1$ and reading the vertices visited by $\gamma$ in order, the vertex that immediately follows the $n$ consecutive $b$ edges of $\gamma$ is $p_l$, where $l$ is equivalent to $n+1$ modulo $k$. Finally, $\gamma$ follows an $a$ edge from $p_l$ to $p_1$. See Figure \ref{coverP} for a schematic.

This implies that there is an incompressible embedded pair of pants $P''$ in $P'$ that contains $\gamma$ (see Figure \ref{coverPwP0} in the case that $k \nmid n$ -- the other case is straightforwardly similar). After identifying $P''$ with $P_0$ appropriately, the closed curve $\gamma$ is given by the equivalence class of $a\cdot b^s$, where $s=\left\lfloor \frac{n}{k} \right\rfloor \ge 1$. By Lemma \ref{gamma non-simple} this curve is not simple, a contradiction.}
\end{proof}

\begin{remark}
{In fact, one can show that $\deg(\gamma_n) = n+1$, but the precise computation of $\deg(\gamma_n)$ is irrelevant.}
\end{remark}

\section{proofs of Theorem \ref{main thm} and Theorem \ref{punctured thm}}
\label{proofs section}

\begin{proof}[Proof of Theorem \ref{main thm}]
{First suppose that $P$ is any pair of pants on $S$, with any choice of identification of $P$ with $P_0$, so that we may view $\{\gamma_n\}$ as a sequence of closed curves on $S$. Suppose that $\pi:S'\to S$ is a cover of $S$ so that $\gamma_n$ lifts to a simple curve $\gamma'$. Let $P'$ be the component of $\pi^{-1}(P)$ containing $\gamma'$. We obtain a cover $\pi|_{P'}:P'\to P$, so that the degree of $S'\to S$ is at least the degree of $P'\to P$. By Proposition \ref{good curves}, the degree of $P'\to P$ is at least $n+1$. Thus $\deg(\gamma_n)\ge n+1$, and the bound for $f_S(n)$ follows immediately from Lemma \ref{gamma non-simple}.

We turn to the bound for $f_\rho(L)$. Let $P$ be a pair of pants with geodesic boundary in a Bers pants decomposition for the hyperbolic metric $\rho$. Let $\alpha$ and $\beta$ be two cuffs of $P$, and let $\delta$ indicate the simple arc connecting $\alpha$ to $\beta$. Let the $\rho$-length of $\delta$ be given by $\ell_\rho(\delta)=D$. Identify $P$ with $P_0$ so that $\alpha$ is in the conjugacy class of $a$ and $\beta$ is in the conjugacy class of $b$, and consider the closed curves $\{\gamma_n\}$ in $P$. Evidently, 
\begin{align*}
\ell_\rho(\gamma_n) & \le \ell_\rho(\alpha) + n\cdot \ell_\rho(\beta) + 2\ell_\rho(\delta) 
\le B(1+n) + 2D.
\end{align*}

Given $\epsilon>0$, for large $n$ the $\rho$-lengths satisfy $\ell_\rho(\gamma_n) \le n \cdot (B+ \epsilon).$ Let $$n=n(L)=\left \lfloor \frac{L}{B+\epsilon} \right\rfloor,$$ so that $\ell_\rho(\gamma_n) \le L$ for large enough $L$. Thus, for large enough $L$, we have $$f_\rho(L) \ge \deg(\gamma_n) \ge n+1 \ge \frac{L}{B+\epsilon}$$ as desired.}
\end{proof}

\begin{proof}[Proof of Theorem \ref{punctured thm}]
{Assume first that $(S,\rho)$ is not the three-punctured sphere. As before, we choose a Bers pants decomposition for $(S,\rho)$, letting $P$ be a pair of pants containing a puncture as a boundary component. Note that by assumption there is a pants curve of $P$ with hyperbolic holonomy. Identify $P$ with $P_0$ so that $b$ is homotopic to a curve that winds once around the puncture, and $a$ is homotopic to a pants curve with hyperbolic holonomy.  Consider again the sequence of curves $\{\gamma_n\}$ on $S$.

We assume the upper half plane model for the hyperbolic plane $\H^2$. By conjugating the holonomy representation of $\rho$ appropriately, we may arrange for the holonomy around the puncture to be the transformation $z\mapsto z+1$, and so that there is a lift of $a$ to the hyperbolic plane $\H^2$ that is contained in a Euclidean circle centered at $0$, say $|z|=s$. 

There is a lift of a curve freely homotopic to $b^n$ that starts at $is$, travels vertically along the imaginary axis to $iy$, travels horizontally to $n+iy$, and vertically down to $n+is$. Let $\beta'_y$ indicate the projection of this curve to $P$, and note that by construction its starting and ending point are in common, and on the geodesic cuff $\alpha$. We may thus concatenate (a parametrization of) $\alpha$ with $\beta'_y$, and the curve so obtained is homotopic to $\gamma_n$.

An elementary computation shows that $\ell_\rho(\beta'_y) = 2 \log (y/s) + n /y.$ Taking $y=n$ we find
$$\ell_\rho(\gamma_n)  \le \ell_\rho(\alpha) + \ell_\rho(\beta'_n) \le B - 2\log s + 1+ 2\log n.$$

Given $\epsilon>0$, for large $n$ the $\rho$-lengths satisfy $\ell_\rho(\gamma_n) \le (2+\epsilon) \log n$, and the result follows as in the proof of Theorem \ref{main thm}: Let $$n=n(L)= \left \lfloor e^{\frac{L}{2+\epsilon}} \right \rfloor,$$
so that $\ell_\rho(\gamma_n) \le L$ for large enough $L$. Thus, for large enough $L$, we have $$f_\rho(L) \ge \deg(\gamma_n) \ge n+1 \ge e^{\frac{L}{2+\epsilon}}$$ as desired.

If, on the other hand, $(S,\rho)$ is the unique hyperbolic structure on the three-punctured sphere, then we identify $P_0$ with $S$ arbitrarily. A straightforward calculation (see \cite[eq.~(29)]{basmajian}) shows that $$\ell_\rho(\gamma_n)=2\cosh^{-1}(1+2n).$$
The latter is asymptotic to $2\log n$ as $n$ goes to infinity, and so, given $\epsilon>0$, for large enough $n$ we have $\ell_\rho(\gamma_n) \le (2+\epsilon) \log n$. The result follows.
}
\end{proof}

\end{document}